\newtheorem{theorem}{Theorem}
\newtheorem{acknowledgement}[theorem]{Acknowledgement}
\newtheorem{corollary}[theorem]{Corollary}
\newtheorem{lemma}[theorem]{Lemma}
\newtheorem{proposition}[theorem]{Proposition}
\begin{document}
\title{On stable range one matrices}
\author{Grigore C\u{a}lug\u{a}reanu, Horia F. Pop}
\thanks{Keywords: stable range 1, clean, exchange, $2\times 2$ matrix. MSC
2010 Classification: 16U99, 16U10, 15B33, 15B36, 16-04, 15-04}

\begin{abstract}
For $2\times 2$ matrices over commutative rings, we prove a characterization
theorem for left stable range 1 elements, we show that the stable range 1
property is left-right symmetric (also) at element level, we show that all
matrices with one zero row (or zero column) over B\'{e}zout rings have
stable range 1. Using diagonal reduction, we characterize all the $2\times 2$
integral matrices which have stable range 1 and discuss additional
properties including Jacobson's Lemma for stable range 1 elements. Finally,
we give an example of exchange stable range 1 integral $2\times 2$ matrix
which is not clean.
\end{abstract}

\maketitle


\section{Introduction}

Recall that a (unital) ring $R$ has \textsl{(left) stable range 1} provided
that for any $a,b\in R$ satisfying $Ra+Rb=R$, there exists $y\in R$ such
that $a+yb$ is left invertible. This condition is left-right symmetric. In a
ring with stable range 1, all one-sided inverses are two-sided, and so in
the definition $a+yb$ must be a unit. Equivalently, $R$ has stable range 1
if for any $a,x,b\in R$ satisfying $xa+b=1$, there exists $y\in R$ such that 
$a+yb$ is a unit. For any positive integer $n$, the matrix ring $\mathbb{M}%
_{n}(R)$ has stable range 1 if and only if $R$ has stable range 1. 

It follows from the definition that stable range 1 rings have an adequate
supply of units. That's why, rings with only few units do not have this
property (e.g. $\mathbb{Z}$, the ring of the integers).

In the sequel $R$ denotes a unital ring, $U(R)$ denotes the set of all the
units of $R$ and $J(R)$ the Jacobson radical of $R$. By $E_{ij}$ we denote
the $n\times n$ matrix with all entries zero, excepting the $(i,j)$-entry,
which is $1$. Whenever it is more convenient, we will use the widely
accepted shorthand \textquotedblleft iff\textquotedblright\ for
\textquotedblleft if and only if\textquotedblright\ in the text.

In \cite{che} we can find the following

\textbf{Definition}. \emph{An element} $a$ in a ring $R$ is said to have 
\textsl{left\ stable range 1} (for short lsr1) if whenever $Ra+Rb=R$ for
some $b\in R$, there is an element $y$ such that $a+yb\in U(R)$.

As already mentioned, $a$ has lsr1 iff whenever $xa+b=1$ for some $a,x,b\in
R $, there exists $y\in R$ such that $a+yb\in U(R)$. Equivalently (we can
eliminate $b$), $a$\emph{\ }has lsr1 iff for every $x$ there exists $y$ such
that $a+y(xa-1)$\ is a unit.

A symmetric definition can be given on the right. An element has \textsl{%
stable range 1} if it has both left and right stable range 1.

To simplify the wording, the element $y$ will be called a \textsl{unitizer}
for $a$, depending on $x$.

We abbreviate \emph{the property} "stable range 1" by "sr1" and when useful, 
$sr1(R)$ denotes \emph{the set} of all the both left and right stable range
1 elements in a ring $R$.

An element in a ring is said to be \textsl{clean} if it is a sum of an
idempotent and a unit. A ring is called clean if so are all its elements. A
clean element is called \textsl{nontrivial clean} if the idempotent in its
decomposition is nontrivial (i.e. $\neq 0,1$).

An element $a\in R$ is called (left) \textsl{exchange} if there exists $m\in
R$ such that $a+m(a-a^{2})$ is idempotent.

\emph{Any clean element (or ring) is exchange} (see \cite{nic}), but both
converses fail. Examples of exchange rings which are not clean were given by
G. Bergman (see \cite{han}, Example 1) and by J. \v{S}ter (see \cite{ste},
Example 3.1).

An element $a$ in a ring $R$ is called \textsl{unit-regular} if there exist
a unit $u\in U(R)$ such that $a=aua$.

\bigskip

In this note we deal with $2\times 2$ matrices over commutative rings. Our
results are: we prove a characterization theorem for left stable range 1
elements, and we show that the stable range 1 property is left-right
symmetric (also) at element level. We show that all matrices over B\'{e}zout
rings with one zero row or one zero column have stable range 1.

Using diagonal reduction, we characterize all the $2\times 2$ integral
matrices which have stable range 1, precisely as the matrices whose
determinant is in $\{-1,0,1\}$. This characterization permits to address
some additional properties for stable range 1 elements, like the
"complementary property" and Jacobson's Lemma.

We describe on one example, the hard process of finding unitizers for given
matrices. Finally, we give in $\mathbb{M}_{2}(\mathbb{Z})$, an example of
exchange stable range 1 element that is not clean.

\section{Main results}

In any ring $R$, $0$, all units and all elements of the Jacobson radical
have sr1. As for the latter, since $a\in J(R)$ iff $1-xa$ is left invertible
for any $x\in R$, we choose the unitizer $y=-(1-a)(1-xa)^{-1}$. The only sr1
elements of $\mathbb{Z}$ are the units $\{\pm 1\}$ and zero.

Next, a useful

\begin{lemma}
\label{unu}(i) If $a$ has lsr1 and $u\in U(R)$ then $ua$ has lsr1.

(ii) If $a$ has lsr1, so is $-a$.

(iii) Left sr1 elements are invariant to conjugations.

(iv) If $a$ has lsr1 and $u\in U(R)$ then $au$ has lsr1.

(v) Left sr1 elements are invariant to equivalences.
\end{lemma}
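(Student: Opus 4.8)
The plan is to work throughout with the equivalent form of the definition recorded above: $a$ has lsr1 precisely when, for every $b$ with $Ra+Rb=R$, there is a $y$ with $a+yb\in U(R)$. Each part then reduces to pushing the relation $Ra+Rb=R$ through a multiplication by a unit and keeping track of what happens to the unitizer.

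For (i), suppose $R(ua)+Rb=R$. Since $u$ is a unit, $Ru=R$, hence $R(ua)=Ra$, and the hypothesis becomes $Ra+Rb=R$. Because $a$ has lsr1 there is a $y$ with $a+yb\in U(R)$; then $u(a+yb)=ua+(uy)b$ is again a unit, so $uy$ is a unitizer for $ua$. Part (ii) is just the special case $u=-1$ of (i).

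The genuinely one-sided statement is (iv), and this is where I expect the only real care to be needed, since the definition is biased toward left ideals while we are multiplying on the right. Suppose $R(au)+Rb=R$. Right multiplication by $u^{-1}$ is a left $R$-module automorphism of $R$ (it commutes with left multiplication), so it carries this relation to $Ra+R(bu^{-1})=R$, using $R(au)u^{-1}=Ra$ and $Ru^{-1}=R$. By lsr1 of $a$ there is a $y$ with $a+y(bu^{-1})\in U(R)$; right multiplying by $u$ gives $(a+ybu^{-1})u=au+yb\in U(R)$. Thus the same $y$ unitizes $au$, which is a pleasant feature of the right-hand case.

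Finally, (v) is immediate by composing (i) and (iv): an equivalence $a\mapsto uav$ is left multiplication by $u$ followed by right multiplication by $v$. For (iii) the cleanest route is to observe that conjugation $r\mapsto uru^{-1}$ is a ring automorphism of $R$, and the lsr1 condition is phrased entirely in terms of left ideals and units, both of which any automorphism preserves; alternatively, (iii) also follows from (i) and (iv) via $uau^{-1}=u\cdot a\cdot u^{-1}$.
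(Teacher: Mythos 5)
Your argument is correct, but it organizes the five parts differently from the paper. You prove (iv) directly, by transporting the comaximality relation $R(au)+Rb=R$ through right multiplication by $u^{-1}$ to get $Ra+R(bu^{-1})=R$ and then pushing the resulting unit back; you then obtain (iii) either from the observation that any ring automorphism preserves the lsr1 condition, or as a corollary of (i) and (iv). The paper goes the other way around: it proves (iii) by a direct computation, rewriting $u^{-1}[a+y(xa-1)]u$ as $u^{-1}au+u^{-1}yu[(u^{-1}xu)(u^{-1}au)-1]$ so that $u^{-1}yu$ serves as a unitizer for $u^{-1}au$ relative to $u^{-1}xu$, and then deduces (iv) as $au=u(u^{-1}au)$ using (i) and (iii). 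Both routes are elementary and complete; yours has the pleasant feature that in (iv) the unitizer $y$ for $a$ is literally the unitizer for $au$ (no conjugation of $y$ is needed), while the paper's direct proof of (iii) exhibits the conjugated unitizer explicitly, which is in the spirit of the rest of the paper where unitizers are tracked concretely. One small point worth making explicit in your (iv): since the lsr1 condition quantifies over all $b$ with $Ra+Rb=R$, you are applying the hypothesis to the element $bu^{-1}$ rather than to $b$ itself, which is exactly why the right-multiplication case needs the ideal-theoretic form of the definition (or an equivalent reindexing of $x$) rather than a verbatim repetition of the left case; you handle this correctly.
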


\begin{proof}
(i) Suppose $x(ua)+b=1$. There is $y$ such that $a+yb\in U(R)$. By left
multiplication with $u$ we get $ua+uyb\in U(R)$, as desired.

(ii) Just take $u=-1$ in (i).

(iii) For every $x$ there is a $y$ such that $a+y(xa-1)\in U(R)$. Then $%
u^{-1}[a+y(xa-1)]u\in U(R)$ but we can write this as $%
u^{-1}au+u^{-1}yu[(u^{-1}xu)(u^{-1}au)-1]$, as desired.

(iv) If $a$ has lsr1 and $u\in U(R)$ then $u^{-1}au$ has lsr1, by (ii). Then
by (i), $u(u^{-1}au)=au$ has lsr1.

(v) Follows from (i) and (iii).
\end{proof}

A symmetric statement holds for right sr1 elements.

\bigskip

A result which supersedes all properties in the previous lemma (see \cite%
{che1}, Lemma 17) is the following

\begin{proposition}
\label{lam}Any finite product of left (or right) stable range 1 elements has
left (or right) stable range 1.
\end{proposition}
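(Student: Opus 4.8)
The plan is to reduce to the two-factor case and argue directly from the definition. By an obvious induction on the number of factors, it suffices to show that if $a$ and $b$ each have lsr1, then the product $ab$ has lsr1; the right-handed version then follows by the symmetric argument. So I would assume $R(ab)+Rc=R$ for some $c\in R$ and hunt for a unitizer $y$ with $ab+yc\in U(R)$.

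First I would \emph{unitize the right factor}. Since $ab\in Rb$, we have $R(ab)\subseteq Rb$, so the hypothesis $R(ab)+Rc=R$ forces $Rb+Rc=R$. As $b$ has lsr1, there is $y_{1}$ with $u:=b+y_{1}c\in U(R)$. Rewriting $b=u-y_{1}c$ and left-multiplying by $a$ yields the key identity
\[
ab+(ay_{1})c=au,
\]
so that, up to a right multiple of $c$, the awkward product $ab$ has been replaced by $au$, in which $u$ is now a \emph{unit}.

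The second step is to unitize $au$. Here I would invoke Lemma \ref{unu}(iv): since $a$ has lsr1 and $u\in U(R)$, the element $au$ also has lsr1. To apply this I must verify comaximality, namely $R(au)+Rc=R$; substituting $ab=au-(ay_{1})c$ into a relation $s(ab)+tc=1$ witnessing the original hypothesis gives $s(au)+(t-say_{1})c=1$, so indeed $1\in R(au)+Rc$. Lsr1 of $au$ then supplies $y_{2}$ with $au+y_{2}c\in U(R)$. Combining with the identity above, $au+y_{2}c=ab+(ay_{1}+y_{2})c$, so $y:=ay_{1}+y_{2}$ is the sought unitizer and $ab$ has lsr1.

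The conceptual heart, and the step I expect to be the main obstacle, is recognizing that the factors must be unitized in the correct order: turning $b$ into the unit $u$ is precisely what lets Lemma \ref{unu}(iv) convert the outer factor's lsr1 into lsr1 of $au$, which is the only form in which the hypothesis on $a$ can be usefully applied. The remaining work—preserving the comaximality relation $R(au)+Rc=R$ through the substitution and bookkeeping the combined unitizer $y=ay_{1}+y_{2}$—is routine.
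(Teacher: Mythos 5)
Your proof is correct. Note, however, that the paper itself gives no proof of this proposition: it is quoted from Chen--Nicholson (\cite{che1}, Lemma~17), so there is nothing internal to compare against. Your argument is the standard one and is sound: the reduction to two factors by induction, the observation that $R(ab)\subseteq Rb$ forces $Rb+Rc=R$ (this is the point where the order of unitization genuinely matters, as you say --- $R(ab)$ need not lie in $Ra$), the identity $ab+(ay_{1})c=au$, the transfer of comaximality to $R(au)+Rc=R$, and the appeal to Lemma~\ref{unu}(iv) to get lsr1 of $au$ are all valid, and the combined unitizer $y=ay_{1}+y_{2}$ checks out. Your write-up thus supplies a self-contained proof where the paper only supplies a citation, using only Lemma~\ref{unu}, which fits the paper's toolkit well.
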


Since this result simplifies a lot, some of our proofs, we shall use it
subsequently.

\bigskip

Working with square matrices, $A\in \mathbb{M}_{2}(R)$ has left stable range
1 iff whenever $XA+B=I_{2}$ there exists $Y\in \mathbb{M}_{2}(R)$ such that $%
A+YB$ is a unit.

Equivalently, $A\in \mathbb{M}_{2}(R)$ has left stable range 1 iff for every 
$X\in \mathbb{M}_{2}(R)$ there is (a unitizer) $Y\in \mathbb{M}_{2}(R)$ such
that $A+Y(XA-I_{2})$ is invertible.

In the sequel, we use the notation $\mathrm{diag}(r,s):=\left[ 
\begin{array}{cc}
r & 0 \\ 
0 & s%
\end{array}%
\right] $.

Next, we record another useful

\begin{lemma}
\label{trans}(i) A matrix $A$\ has left sr1 iff the transpose $A^{T}$\ has
right sr1.

(ii) $\mathrm{diag}(r,s)$ has left (or right) sr1 iff $\mathrm{diag}(s,r)$
has left (respectively right) sr1.

(iii) $\mathrm{diag}(r,s)$ has left (or right) sr1 iff $\mathrm{diag}(r,-s)$
has left (respectively right) sr1.
\end{lemma}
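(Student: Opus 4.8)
The plan is to prove each of the three parts of Lemma~\ref{trans} by exploiting the previously established structural results, especially Lemma~\ref{unu} and the characterization of left stable range~1 in terms of unitizers. The key observation throughout is that transposition and the various diagonal manipulations can all be realized through multiplication by units and through the transpose operation, so I would aim to reduce everything to facts already in hand rather than manipulate the defining condition $A+Y(XA-I_2)$ directly.

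For part (i), I would argue directly from the definition. Suppose $A$ has left sr1, and suppose $CA^T+D=I_2$ for some $C,D$. Transposing this equation gives $AC^T+D^T=I_2$, which I want to rewrite as a \emph{right} sr1 hypothesis for $A$ read through its transpose; more cleanly, I would start from the fact that $A$ has left sr1 iff for every $X$ there is $Y$ with $A+Y(XA-I_2)\in U(\mathbb{M}_2(R))$, transpose the whole expression using $(PQ)^T=Q^TP^T$ and the fact that $M$ is invertible iff $M^T$ is invertible, and read off that $A^T$ satisfies the right-hand unitizer condition. The main care needed here is bookkeeping: transposition reverses the order of products, so a \emph{left} multiplier becomes a \emph{right} multiplier, which is exactly why ``left'' turns into ``right.'' I expect this to be the part requiring the most attention to detail, though it is not conceptually hard.

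For part (ii), I would use the permutation matrix $P=\bigl[\begin{smallmatrix}0&1\\1&0\end{smallmatrix}\bigr]$, which is a unit (indeed $P=P^{-1}$). A direct computation gives $P\,\mathrm{diag}(r,s)\,P=\mathrm{diag}(s,r)$, so $\mathrm{diag}(s,r)$ is a conjugate of $\mathrm{diag}(r,s)$. By Lemma~\ref{unu}(iii), left sr1 is invariant under conjugation, which immediately yields the left version; the right version follows from the symmetric statement noted after Lemma~\ref{unu}. For part (iii), I would use the diagonal unit $u=\mathrm{diag}(1,-1)$ and observe $u\,\mathrm{diag}(r,s)=\mathrm{diag}(r,-s)$. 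Then Lemma~\ref{unu}(i) (multiplication on the left by a unit preserves left sr1) gives one direction, and applying the same unit again recovers $\mathrm{diag}(r,s)$, giving the converse; again the right-sided claim follows by the symmetric version of Lemma~\ref{unu}.

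The only genuine obstacle is getting part (i) stated with the quantifiers in the correct order, since the equivalence between the ``$XA+B=I_2$'' form and the ``$A+Y(XA-I_2)$'' form must survive transposition intact. Everything else reduces to producing the right unit ($P$ or $\mathrm{diag}(1,-1)$) and invoking the appropriate clause of Lemma~\ref{unu}, so I would present parts (ii) and (iii) tersely and devote the bulk of the written argument to the transpose computation in part (i).
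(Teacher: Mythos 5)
Your proposal is correct and takes essentially the same route as the paper: part (i) by transposing $A+Y(XA-I_{2})$ into $A^{T}+(A^{T}X^{T}-I_{2})Y^{T}$, part (ii) by conjugation with the involution $E_{12}+E_{21}$ via Lemma~\ref{unu}(iii), and part (iii) by multiplication with the unit $\mathrm{diag}(1,-1)$ (the paper phrases this as invariance under equivalence, Lemma~\ref{unu}(v), which reduces to the same unit multiplication).
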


\begin{proof}
(i) Indeed, $(A+Y(XA-I_{2}))^{T}=A^{T}+(A^{T}X^{T}-I_{2})Y^{T}$ is also a
unit.

(ii) Follows from (iii), the previous lemma, by conjugation with the
involution $E_{12}+E_{21}$.

(iii) Follows from (v), the previous lemma, since $\mathrm{diag}(r,-s)$ is
equivalent to $\mathrm{diag}(r,s)$.
\end{proof}

\textbf{Remark}. When dealing with integral diagonal matrices $\mathrm{diag}%
(n,m)$, with respect to left (or right) sr1, we can suppose $0\leq n\leq m$.

\bigskip

The case of diagonal matrices is of utmost importance because of the
following

\textbf{Definition}. Let $R$ be a commutative unital ring. An $n\times n$
matrix $A$ has a \textsl{diagonal reduction} if there exist units $U$, $V$
such that $UAV=\mathrm{diag}(d_{1},d_{2},...,d_{n})$ is a diagonal matrix,
such that $d_{i}$ divides $d_{i+1}$ for every $1\leq i\leq n-1$.

Following Kaplansky, a ring $R$ is called an \textsl{elementary divisor}
ring if every matrix admits a diagonal reduction. Any diagonal reduction of $%
A$ is called the \textsl{Smith normal form} of $A$. Every PIR (principal
ideal ring) is an elementary divisor ring (see \cite{bro}) and in
particular, $\mathbb{Z}$ is an elementary divisor ring.

\bigskip

In Lemma \ref{unu}, we saw that having stable range 1 property is invariant
to equivalences. Hence

\begin{proposition}
\label{ele}Let $R$ be an elementary divisor ring and $A\in \mathbb{M}_{n}(R)$%
. Then $A$ has stable range 1 iff the Smith normal form of $A$ has stable
range 1.
\end{proposition}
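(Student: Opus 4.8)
The plan is to prove Proposition~\ref{ele} as a direct corollary of the equivalence-invariance established in Lemma~\ref{unu}. The statement asserts that for a matrix $A$ over an elementary divisor ring, $A$ has stable range 1 if and only if its Smith normal form does. Since the Smith normal form is obtained from $A$ precisely by multiplying on the left and right by units $U,V$ (i.e.\ $UAV$ is the diagonal reduction), the whole content should reduce to the observation that $A$ and $UAV$ are equivalent matrices in $\mathbb{M}_n(R)$.

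**First I would** invoke the definition of equivalence: two matrices $A$ and $A'$ in $\mathbb{M}_n(R)$ are equivalent when $A' = UAV$ for some $U,V\in U(\mathbb{M}_n(R))$. The existence of a diagonal reduction for $A$ means exactly that there are units $U,V$ with $UAV = \mathrm{diag}(d_1,\dots,d_n)$, so $A$ and its Smith normal form are equivalent by construction. **Then I would** appeal to Lemma~\ref{unu}(v), which states that left sr1 is invariant under equivalence, together with the symmetric statement for right sr1 noted after that lemma. Combining both, the two-sided property (having both left and right sr1, i.e.\ membership in $sr1(R)$) is also preserved under equivalence.

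**I should be careful** about two small points. One is the distinction between the element-level stable range property (which is what Lemma~\ref{unu} addresses) and the ring-level property mentioned in the introduction; here the proposition is evidently about the \emph{element} $A$ and its image, so the element-level invariance is exactly what is needed. The other is that Lemma~\ref{unu}(v) is phrased for left sr1 only, so I would explicitly note that the symmetric version on the right holds too (as remarked in the text immediately after the lemma), and conclude the full sr1 property transfers. With these in place, the proof is essentially one line: since $A$ is equivalent to its Smith normal form, Lemma~\ref{unu}(v) and its right-sided analogue give that one has sr1 iff the other does.

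**The main obstacle**—to the extent there is one—is purely a matter of bookkeeping rather than mathematical difficulty: making sure the notion of ``equivalence'' used in Lemma~\ref{unu}(v) is literally the same two-sided unit-multiplication relation that the Smith normal form realizes, and that ``stable range 1'' in the proposition means the two-sided element property so that both (i) and (iii)/(v) of Lemma~\ref{unu} are brought to bear. I expect no genuine computation to be required; the result is a clean packaging of the invariance lemma applied to the defining relation $UAV = \mathrm{diag}(d_1,\dots,d_n)$.
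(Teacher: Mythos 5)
Your proposal is correct and matches the paper's own (one-line) argument exactly: the paper derives Proposition~\ref{ele} immediately from the remark that, by Lemma~\ref{unu}, the stable range 1 property is invariant under equivalence, and the Smith normal form $UAV$ is by definition equivalent to $A$. Your extra care about the left/right symmetry and the element-level versus ring-level distinction is sound but does not change the route.
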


Therefore, over an elementary divisor ring, the determination of the
matrices which have sr1, reduces to diagonal matrices.

\bigskip

As our first main result, we characterize the $2\times 2$ left stable range
1 matrices over any commutative ring.

\begin{theorem}
\label{lsr1}Let $R$ be a commutative ring and $A\in \mathbb{M}_{2}(R)$. Then 
$A$ has left stable range 1 iff for any $X\in \mathbb{M}_{2}(R)$ there
exists $Y\in \mathbb{M}_{2}(R)$ such that 
\begin{equation*}
\det (Y)(\det (X)\det (A)-\mathrm{Tr}(XA)+1)+\det (A(\mathrm{Tr}(XY)+1))-%
\mathrm{Tr}(A\mathrm{adj}(Y))
\end{equation*}%
is a unit of $R$. Here $\mathrm{adj}(Y)$ is the classical adjoint.
\end{theorem}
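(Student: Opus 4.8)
The plan is to collapse the invertibility condition into a single determinant identity and then expand that determinant using the standard $2\times 2$ tools. Since $R$ is commutative, a matrix $M\in\mathbb{M}_2(R)$ is invertible iff $\det(M)\in U(R)$. Combined with the reformulation recorded just before the statement ($A$ has left sr1 iff for every $X$ there is a unitizer $Y$ with $A+Y(XA-I_2)$ invertible), the theorem reduces to proving, for all $X,Y$, the identity
\[ \det\bigl(A+Y(XA-I_2)\bigr)=\det(Y)\bigl(\det(X)\det(A)-\mathrm{Tr}(XA)+1\bigr)+\det(A)\bigl(\mathrm{Tr}(XY)+1\bigr)-\mathrm{Tr}\bigl(A\,\mathrm{adj}(Y)\bigr), \]
reading the middle term $\det(A(\mathrm{Tr}(XY)+1))$ of the statement as the product $\det(A)\cdot(\mathrm{Tr}(XY)+1)$. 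Once this identity is in hand, the equivalence is immediate: the left side is a unit exactly when the right side is.

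First I would record the three elementary $2\times 2$ facts needed, each a one-line column-expansion check valid over any commutative ring: (a) the polarization identity $\det(M+N)=\det(M)+\det(N)+\mathrm{Tr}(\mathrm{adj}(M)N)$; (b) $\mathrm{adj}(M)=\mathrm{Tr}(M)I_2-M$, whence $\det(M-I_2)=\det(M)-\mathrm{Tr}(M)+1$; and (c) $M\,\mathrm{adj}(M)=\det(M)I_2$ together with the trace symmetry $\mathrm{Tr}(\mathrm{adj}(M)N)=\mathrm{Tr}(M\,\mathrm{adj}(N))$.

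Then I would apply (a) with $M=A$ and $N=Y(XA-I_2)$. The term $\det(N)=\det(Y)\det(XA-I_2)$ becomes $\det(Y)\bigl(\det(X)\det(A)-\mathrm{Tr}(XA)+1\bigr)$ by (b) and multiplicativity of the determinant, producing the first summand verbatim. For the cross term I would write $\mathrm{Tr}\bigl(\mathrm{adj}(A)\,Y(XA-I_2)\bigr)=\mathrm{Tr}(\mathrm{adj}(A)YXA)-\mathrm{Tr}(\mathrm{adj}(A)Y)$; cyclicity of the trace turns the first piece into $\mathrm{Tr}\bigl(A\,\mathrm{adj}(A)\,YX\bigr)=\det(A)\,\mathrm{Tr}(XY)$ by (c), and the trace symmetry in (c) rewrites $\mathrm{Tr}(\mathrm{adj}(A)Y)$ as $\mathrm{Tr}\bigl(A\,\mathrm{adj}(Y)\bigr)$. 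Combining the leftover $\det(A)$ coming from $\det(M)$ with $\det(A)\,\mathrm{Tr}(XY)$ yields the middle summand $\det(A)\bigl(\mathrm{Tr}(XY)+1\bigr)$ and leaves the final term $-\mathrm{Tr}\bigl(A\,\mathrm{adj}(Y)\bigr)$.

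The one place demanding care is this cross term: the anti-multiplicativity $\mathrm{adj}(PQ)=\mathrm{adj}(Q)\,\mathrm{adj}(P)$ makes a frontal expansion of $\mathrm{adj}\bigl(Y(XA-I_2)\bigr)$ unpleasant, so the key is to leave $N$ unexpanded inside $\mathrm{Tr}(\mathrm{adj}(A)N)$ and to exploit $A\,\mathrm{adj}(A)=\det(A)I_2$ through cyclicity instead. This is exactly what collapses the entire $X$-dependence of the cross term into the single scalar $\det(A)\,\mathrm{Tr}(XY)$, and hence forces the middle term to be \emph{linear} in $\mathrm{Tr}(XY)$; evaluating both sides at the trivial instance $A=X=I_2$ is a cheap sanity check that pins down the correct form. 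No step invokes anything beyond commutativity of $R$, so the argument applies uniformly to every commutative ring.
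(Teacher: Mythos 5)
Your proof is correct, and it reaches the paper's identity by a genuinely different route. The paper proves the theorem by writing out the two columns of $A+Y(XA-I_2)$ in full coordinates, expanding the determinant, and painstakingly gathering the coefficients of $xz$, $xt$, $yz$, $yt$ and the remaining linear terms until the expression organizes itself into the stated form. You instead derive the same identity structurally: the $2\times 2$ polarization formula $\det(M+N)=\det(M)+\det(N)+\mathrm{Tr}(\mathrm{adj}(M)N)$, the relation $\mathrm{adj}(M)=\mathrm{Tr}(M)I_2-M$ (which gives $\det(XA-I_2)=\det(X)\det(A)-\mathrm{Tr}(XA)+1$ and the symmetry $\mathrm{Tr}(\mathrm{adj}(M)N)=\mathrm{Tr}(M\,\mathrm{adj}(N))$), and the collapse of $\mathrm{Tr}(\mathrm{adj}(A)YXA)$ to $\det(A)\,\mathrm{Tr}(XY)$ via cyclicity and $A\,\mathrm{adj}(A)=\det(A)I_2$. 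Every term of the statement then appears with a visible reason, and in particular your reading of the (typographically ambiguous) middle term as the product $\det(A)\cdot(\mathrm{Tr}(XY)+1)$ is exactly what the paper's coordinate computation produces, namely $\det(A)(ax+bz+cy+dt+1)$. What your approach buys is transparency and verifiability --- each step is a one-line identity rather than a page of bookkeeping --- at the cost of assembling a small toolkit of $2\times 2$ facts; what the paper's brute-force expansion buys is self-containedness and no reliance on trace/adjugate identities. Both arguments are intrinsically limited to the $2\times 2$ case, since the polarization identity and $\mathrm{adj}(M)=\mathrm{Tr}(M)I_2-M$ are special to that size.
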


\begin{proof}
As already mentioned, $A$ has lsr1 in $\mathbb{M}_{2}(R)$ iff for every $X=%
\left[ 
\begin{array}{cc}
a & b \\ 
c & d%
\end{array}%
\right] \in \mathbb{M}_{2}(R)$ there is $Y=\left[ 
\begin{array}{cc}
x & y \\ 
z & t%
\end{array}%
\right] \in \mathbb{M}_{2}(R)$ such that $A+Y(XA-I_{2})$ is invertible.
Since the base ring is supposed to be commutative, $A+Y(XA-I_{2})$ is
invertible in $\mathbb{M}_{2}(R)$ iff $\det (A+Y(XA-I_{2}))$ is a unit of $R$%
. For $A=\left[ 
\begin{array}{cc}
a_{11} & a_{12} \\ 
a_{21} & a_{22}%
\end{array}%
\right] $, the computation amounts to the determinant of the $2\times 2$
matrix with columns 
\begin{equation*}
\begin{array}{c}
C_{1}=\left[ 
\begin{array}{c}
a_{11}+(aa_{11}+ba_{21}-1)x+(ca_{11}+da_{21})y \\ 
a_{21}+(aa_{11}+ba_{21}-1)z+(ca_{11}+da_{21})t%
\end{array}%
\right] \\ 
\mathrm{and} \\ 
C_{2}=\left[ 
\begin{array}{c}
a_{12}+(aa_{12}+ba_{22})x+(ca_{12}+da_{22}-1)y \\ 
a_{22}+(aa_{12}+ba_{22})z+(ca_{12}+da_{22}-1)t%
\end{array}%
\right] .%
\end{array}%
\end{equation*}%
In computing this determinant, there are several terms we gather as follows:

the coefficient of $xz$: $%
(aa_{11}+ba_{21}-1)(aa_{12}+ba_{22})-(aa_{11}+ba_{21}-1)(aa_{12}+ba_{22})$,
which equals zero,

the coefficient of $xt$: $%
(aa_{11}+ba_{21}-1)(ca_{12}+da_{22}-1)-(ca_{11}+da_{21})(aa_{12}+ba_{22})=%
\det (X)\det (A)-aa_{11}-ba_{21}-ca_{12}-da_{22}+1$

the coefficient of $yz$: $%
(ca_{11}+da_{21})(aa_{12}+ba_{22})-(aa_{11}+ba_{21}-1)(ca_{12}+da_{22}-1)=-%
\det (X)\det (A)+aa_{11}+ba_{21}+ca_{12}+da_{22}-1$

the coefficient of $yt$: $%
(ca_{11}+da_{21})(ca_{12}+da_{22}-1)-(ca_{11}+da_{21})(ca_{12}+da_{22}-1)$,
which equals zero,

and another five terms

$a_{11}[(aa_{12}+ba_{22})z+(ca_{12}+da_{22}-1)t]$, $%
a_{22}[(aa_{11}+ba_{21}-1)x+(ca_{11}+da_{21})y]$,

$-a_{12}[(aa_{11}+ba_{21}-1)z+(ca_{11}+da_{21})t]$, $%
-a_{21}[(aa_{12}+ba_{22})x+(ca_{12}+da_{22}-1)y]$,

$\det (A)$.

Then this determinant is 
\begin{equation*}
\begin{array}{c}
\det (Y)(\det (X)\det (A)-aa_{11}-ba_{21}-ca_{12}-da_{22}+1)+ \\ 
+a_{11}[(aa_{12}+ba_{22})z+(ca_{12}+da_{22}-1)t]+ \\ 
+a_{22}[(aa_{11}+ba_{21}-1)x+(ca_{11}+da_{21})y]- \\ 
-a_{12}[(aa_{11}+ba_{21}-1)z+(ca_{11}+da_{21})t]- \\ 
-a_{21}[(aa_{12}+ba_{22})x+(ca_{12}+da_{22}-1)y]+\det (A)%
\end{array}%
\end{equation*}%
or%
\begin{equation*}
\begin{array}{c}
\det (Y)(\det (X)\det (A)-aa_{11}-ba_{21}-ca_{12}-da_{22}+1)+ \\ 
+\det (A)(ax+bz+cy+dt+1)-a_{11}t+a_{12}z+a_{21}y-a_{22}x%
\end{array}%
.
\end{equation*}

Finally this gives the condition in the statement.
\end{proof}

\begin{corollary}
Let $R$ be a commutative ring and $A\in \mathbb{M}_{2}(R)$. Then $A$ has
left stable range 1 iff $A$ has right stable range 1.
\end{corollary}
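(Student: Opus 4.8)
The plan is to exploit the symmetry already visible in the characterization of Theorem~\ref{lsr1}. Observe that the polynomial expression
\[
\det(Y)(\det(X)\det(A)-\mathrm{Tr}(XA)+1)+\det(A)(\mathrm{Tr}(XY)+1)-\mathrm{Tr}(A\,\mathrm{adj}(Y))
\]
is remarkably balanced in $X$ and $Y$: the quantities $\det(X)$, $\det(Y)$, $\det(A)$, $\mathrm{Tr}(XA)$ and $\mathrm{Tr}(XY)$ are all invariant under transposition (since $\mathrm{Tr}(M)=\mathrm{Tr}(M^{T})$ and $\det(M)=\det(M^{T})$), and the classical adjoint satisfies $\mathrm{adj}(Y^{T})=\mathrm{adj}(Y)^{T}$. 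First I would write down the right stable range 1 analogue of Theorem~\ref{lsr1}. By Lemma~\ref{trans}(i), $A$ has right sr1 iff $A^{T}$ has left sr1, so I would apply Theorem~\ref{lsr1} to $A^{T}$ and simplify the resulting expression using these transposition invariances.

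The key computation is to check that the left-sr1 condition for $A^{T}$ coincides, term by term, with the left-sr1 condition for $A$ after renaming the quantifier variables. Concretely, the characterization for $A^{T}$ says: for every $X$ there is $Y$ with
\[
\det(Y)(\det(X)\det(A^{T})-\mathrm{Tr}(XA^{T})+1)+\det(A^{T})(\mathrm{Tr}(XY)+1)-\mathrm{Tr}(A^{T}\,\mathrm{adj}(Y))
\]
a unit. Since $\det(A^{T})=\det(A)$ and $\mathrm{Tr}(XA^{T})=\mathrm{Tr}(A^{T}X^{T})^{T}=\mathrm{Tr}((XA^{T}))$, the only term that needs care is $\mathrm{Tr}(A^{T}\,\mathrm{adj}(Y))$. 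I would verify that $\mathrm{Tr}(A^{T}\,\mathrm{adj}(Y))=\mathrm{Tr}(A\,\mathrm{adj}(Y^{T}))=\mathrm{Tr}(A\,\mathrm{adj}(Y)^{T})$, and that running the universal quantifier over $X^{T}$ and the existential over $Y^{T}$ (which is harmless, as transposition is a bijection on $\mathbb{M}_{2}(R)$) turns the $A^{T}$-expression into exactly the $A$-expression. Thus the right-sr1 condition for $A$ is logically equivalent, as a two-sided quantified statement, to the left-sr1 condition for $A$.

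The main obstacle I anticipate is bookkeeping rather than conceptual: one must be careful that the substitution $X\mapsto X^{T}$, $Y\mapsto Y^{T}$ is applied consistently to both quantifiers, so that the ``for all $X$ there exists $Y$'' structure is genuinely preserved and not silently swapped. A cleaner route that avoids even this bookkeeping is to bypass the explicit polynomial entirely: since $A+Y(XA-I_{2})$ is a unit iff its transpose $A^{T}+(A^{T}X^{T}-I_{2})Y^{T}$ is a unit (already noted in the proof of Lemma~\ref{trans}(i)), the left-sr1 defining property of $A$ is equivalent to the right-sr1 defining property of $A$ directly, because over a commutative ring a matrix is a unit precisely when its transpose is. I would present this transpose argument as the proof and remark that it also explains \emph{a priori} the manifest $X$--$Y$ symmetry of the expression in Theorem~\ref{lsr1}.
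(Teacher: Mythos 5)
Your first argument --- apply Theorem \ref{lsr1} to $A^{T}$, substitute $X\mapsto X^{T}$ and $Y\mapsto Y^{T}$ (a bijection, so the quantifier structure is preserved), and check term by term that the expression is unchanged using $\det(M^{T})=\det(M)$, $\mathrm{Tr}(M^{T})=\mathrm{Tr}(M)$, $\mathrm{Tr}(MN)=\mathrm{Tr}(NM)$ and $\mathrm{adj}(Y^{T})=\mathrm{adj}(Y)^{T}$ --- is correct and is essentially the paper's own proof, which the authors compress into the sentence that ``changing $A,X,Y$ into transposes and reversing the order of the products does not change the condition.'' (You have also silently corrected the statement's typographical glitch by reading the middle term as $\det(A)(\mathrm{Tr}(XY)+1)$, which is what the proof of Theorem \ref{lsr1} actually produces.)

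The ``cleaner route'' that you say you would actually present, however, does not prove the corollary. Transposing $A+Y(XA-I_{2})$ gives $A^{T}+(A^{T}X^{T}-I_{2})Y^{T}$, an expression built from $A^{T}$, not from $A$; so the bare transpose argument shows only that $A$ has left sr1 iff $A^{T}$ has right sr1, which is exactly Lemma \ref{trans}(i) and not the statement at hand. To pass from the right-sr1 property of $A^{T}$ to the right-sr1 property of $A$ you need the nontrivial fact that the sr1 condition is insensitive to replacing $A$ by $A^{T}$, and that is precisely what the symmetry of the polynomial in Theorem \ref{lsr1} supplies; there is no a priori reason for it (for instance, conjugation tricks do not obviously carry $A$ to $A^{T}$ over an arbitrary commutative ring). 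So the explicit polynomial cannot be ``bypassed entirely'': drop the last paragraph and keep your first argument as the proof.
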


\begin{proof}
Using the properties of determinants, the properties of the trace and the
commutativity of the base ring, it is readily seen that changing $A,X,Y$
into transposes and reversing the order of the products does not change the
condition in the previous theorem.
\end{proof}

Using this characterization, some special cases are worth mentioning. Since
in the sequel, the base ring for the matrices we consider is commutative,
according to the previous corollary, we will drop the "left" or "right" word
before $2\times 2$ stable range 1 matrices.

\begin{proposition}
\label{trei}Let $R$ be any ring.

(a) Idempotents have sr1.

(b) In $\mathbb{M}_{n}(R)$, $n\geq 2$, all matrices $rE_{ij}$ with $1\leq
i,j\leq n$ and $r\in R$, have sr1.
\end{proposition}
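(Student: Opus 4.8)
The plan is to handle the two parts separately, each by producing an explicit unitizer rather than an existence argument. For (a) I would work directly from the elementwise criterion recorded in the excerpt: an idempotent $e$ has left sr1 iff for every $x$ there is $y$ with $e+y(xe-1)\in U(R)$. The key observation is that the single choice $y=e-1$ works uniformly in $x$. Indeed $e+(e-1)(xe-1)=1+(e-1)xe$, and $(e-1)xe$ squares to zero because $e(e-1)=0$, so $1+(e-1)xe$ is a unit with inverse $1-(e-1)xe$. The right-handed criterion $e+(ex-1)y$ is dispatched by the mirror identity $e+(ex-1)(e-1)=1+ex(e-1)$, again a unit plus a square-zero nilpotent. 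Neither computation uses commutativity, so (a) holds over any $R$; there is no real obstacle here beyond spotting the unitizer $e-1$.

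For (b) the first step is a reduction. For any $i,j$ there are permutation matrices $P,Q\in U(\mathbb{M}_n(R))$ with $P(rE_{ij})Q=rE_{11}=\mathrm{diag}(r,0,\dots,0)$, so $rE_{ij}$ is equivalent to $\mathrm{diag}(r,0,\dots,0)$; by Lemma \ref{unu}(v) and its right-symmetric analogue, equivalence preserves both left and right sr1, so it suffices to treat $A:=\mathrm{diag}(r,0,\dots,0)$. Setting $B:=XA-I_n$, one checks that columns $2,\dots,n$ of $A+YB$ are exactly the negatives of the corresponding columns of $Y$ (hence freely prescribable), while only $X_{11}$ and the first column of $X$ enter column $1$. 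I would then exhibit the explicit unitizer $Y=\left[\begin{smallmatrix}0&-1\\-1&X_{11}\end{smallmatrix}\right]\oplus(-I_{n-2})$ and compute that $A+YB$ becomes block lower triangular $\left[\begin{smallmatrix}N'&0\\ C&I_{n-2}\end{smallmatrix}\right]$ with $N'=\left[\begin{smallmatrix}r-cr&1\\ 1-ar+acr&-a\end{smallmatrix}\right]$, where $a=X_{11}$, $c=X_{21}$. Such a block matrix is a unit iff $N'$ is, and $N'$ is a unit because its $(1,2)$-entry equals $1\in U(R)$ and the associated Schur complement is $s-tq^{-1}p=(1-ar+acr)-(-a)(r-cr)=1$. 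The right-hand statement follows by the same method applied to the defining identity $A+(AX-I_n)Y$ for right sr1, which now singles out the first row of $X$.

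The main obstacle is exactly the general-$n$ case of (b). The tempting approach—choosing $Y$ so that the bottom-right $(n-1)\times(n-1)$ block of $A+YB$ equals $I_{n-1}$ and then taking a Schur complement—fails, because that Schur complement collapses to $r+\mu(\alpha r-1)$ for a freely chosen $\mu\in R$ (here $\alpha=X_{11}$), which need not be a unit; e.g.\ $r=2,\ \alpha=3$ over $\mathbb{Z}$ gives $2+5\mu\notin U(\mathbb{Z})$ for all $\mu$. The moral is that invertibility cannot be repaired inside the corner $R$ alone: one must use the off-diagonal freedom to manufacture a genuine unit, which is precisely what placing the entry $1$ in the $(1,2)$-slot of $N'$ achieves. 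Recognizing this, and thereby collapsing the $n\times n$ problem onto the single $2\times2$ core whose Schur complement is identically $1$, is the crux of the proof.
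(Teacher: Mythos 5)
Your proof is correct, but it takes a genuinely different route on both parts. For (a) the paper computes nothing: it simply quotes that idempotents are unit-regular and that unit-regular elements have stable range 1 (Theorem 3.2 of \cite{kl}). Your explicit unitizer $y=e-1$, giving $e+(e-1)(xe-1)=1+(e-1)xe$ with $\bigl((e-1)xe\bigr)^2=0$ because $e(e-1)=0$, is a short self-contained replacement that also hands you the inverse $1-(e-1)xe$, and the mirror computation settles the right-hand version. For (b) both arguments start with the same reduction of $rE_{ij}$ to $rE_{11}$ by permutation equivalence and Lemma \ref{unu}(v), and both then exhibit an explicit unitizer, but the unitizers and the verifications differ: the paper takes $Y$ to be the antidiagonal permutation matrix with corner entry $(-1)^na_1$ and checks that $\det\bigl(rE_{11}+Y(X(rE_{11})-I_n)\bigr)=(-1)^n$, whereas your $Y=\left[\begin{smallmatrix}0&-1\\-1&X_{11}\end{smallmatrix}\right]\oplus(-I_{n-2})$ makes $A+YB$ block lower triangular with an $I_{n-2}$ block and a $2\times 2$ core whose invertibility is read off from the unit $(1,2)$-entry and a Schur complement identically equal to $1$. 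Your verification has the advantage of never invoking determinants, which is relevant because the proposition is stated over an arbitrary, possibly noncommutative, ring $R$, where ``determinant is a unit'' is not a criterion for invertibility; the paper's matrix is close enough to a signed permutation matrix that its invertibility can also be checked directly, but your route makes the noncommutative case transparent, at the cost of a slightly less memorable unitizer.
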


\begin{proof}
(a) Idempotents are unit-regular and unit-regular elements have sr1 (see
Theorem 3.2 in \cite{kl}).

(b) Since sr1 is invariant to equivalences, using two permutation matrices
(interchange first and $i$-th row, interchange of first and $j$-th column),
it suffices to show that $rE_{11}$ has sr1. For every $n\times n$ matrix $X$
we indicate a unitizer $Y$ such that $rE_{11}+Y(X(rE_{11})-I_{n})$ has
determinant $(-1)^{n}$.

Take $Y=\left[ 
\begin{array}{ccccc}
0 & 0 & \cdots & 0 & 1 \\ 
0 & 0 & \cdots & 1 & 0 \\ 
\vdots & \vdots & \ddots & \vdots & \vdots \\ 
0 & 1 & \cdots & 0 & 0 \\ 
1 & 0 & \cdots & 0 & (-1)^{n}a_{1}%
\end{array}%
\right] $, where $\mathrm{col}_{1}(X)=\left[ 
\begin{array}{c}
a_{1} \\ 
\vdots \\ 
a_{n}%
\end{array}%
\right] $. Then we obtain $rE_{11}+Y(X(rE_{11})-I_{n})=\left[ 
\begin{array}{ccccc}
r(1+a_{n}) & 0 & \cdots & 0 & -1 \\ 
ra_{n-1} & 0 & \cdots & -1 & 0 \\ 
\vdots & \vdots & \ddots & \vdots & \vdots \\ 
ra_{2} & -1 & \cdots & 0 & 0 \\ 
ra_{1}(1+a_{n})-1 & 0 & \cdots & 0 & (-1)^{n}a_{1}%
\end{array}%
\right] $. A simple computation shows that $\det
[rE_{11}+Y(X(rE_{11})-I_{n})]=(-1)^{n}$, as desired.
\end{proof}

\textbf{Remarks}. 1) The property (b) above is an Exercise in \cite{lam1}
(Section 24, Exercise 19, (3)\ A). The above proof is our solution.

2) For the $2\times 2$ cases, $rE_{11}$, $rE_{12}$, $rE_{21}$, $rE_{22}$ and 
$X=\left[ 
\begin{array}{cc}
a & b \\ 
c & d%
\end{array}%
\right] $, unitizers are $\left[ 
\begin{array}{cc}
0 & 1 \\ 
1 & a%
\end{array}%
\right] $, $\left[ 
\begin{array}{cc}
1 & 0 \\ 
c & 1%
\end{array}%
\right] $, $\left[ 
\begin{array}{cc}
1 & b \\ 
0 & 1%
\end{array}%
\right] $ and $\left[ 
\begin{array}{cc}
d & 1 \\ 
1 & 0%
\end{array}%
\right] $, respectively.

\begin{corollary}
Let $R$ be any ring. In $\mathbb{M}_{2}(R)$, units, idempotents, and
matrices with three zeros, all have sr1.
\end{corollary}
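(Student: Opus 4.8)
The plan is to prove that units, idempotents, and matrices with three zeros all have sr1 in $\mathbb{M}_2(R)$ by treating each class separately and invoking results already established. For units, I would observe that a unit is a product of (itself with) units and that units trivially have sr1: if $XA+B=I_2$ with $A$ invertible, one may simply take $Y=0$ since $A$ itself is already a unit. Alternatively, this follows from Proposition \ref{lam}, since a unit is a (trivial) product of stable range 1 elements, or directly from the fact recorded at the start of Section 2 that all units have sr1 in any ring.

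For idempotents, the work is already done by Proposition \ref{trei}(a): idempotents are unit-regular and unit-regular elements have sr1. So I would simply cite that part. The only mild subtlety is that Proposition \ref{trei}(a) is stated for an arbitrary ring $R$, and here we want the conclusion in $\mathbb{M}_2(R)$; but $\mathbb{M}_2(R)$ is itself a ring, so an idempotent matrix is an idempotent element of that ring and the citation applies verbatim.

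The heart of the corollary is the claim about matrices with three zeros. A $2\times 2$ matrix with three zero entries is exactly a matrix of the form $rE_{ij}$ for some $r\in R$ and some pair $(i,j)$ with $1\le i,j\le 2$. These are precisely the matrices covered by Proposition \ref{trei}(b) in the case $n=2$ (the remark following that proposition even lists explicit unitizers for $rE_{11}$, $rE_{12}$, $rE_{21}$, $rE_{22}$). Hence I would reduce the three-zeros case directly to Proposition \ref{trei}(b). The key observation to state clearly is that ``three zeros'' and ``of the form $rE_{ij}$'' describe the same set of matrices, so no separate computation is needed.

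I expect no genuine obstacle here: the corollary is a straightforward packaging of the preceding proposition and the opening remarks of the section, and its proof should amount to three short citations. The only point requiring a sentence of care is the identification of the three-zeros matrices with the $rE_{ij}$, and the remark that the sr1 property for units is immediate (take $Y=0$). Thus the proof I would write is essentially: \emph{Units have sr1 by the opening observation of this section (or take $Y=0$); idempotents have sr1 by Proposition \ref{trei}(a); and a matrix with three zeros is of the form $rE_{ij}$, hence has sr1 by Proposition \ref{trei}(b).}
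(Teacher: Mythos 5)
Your proposal is correct and matches the paper's intent exactly: the corollary is stated without proof precisely because it is the packaging you describe, namely units have sr1 by the opening observation of Section 2, idempotents by Proposition \ref{trei}(a), and a matrix with three zeros is some $rE_{ij}$, hence covered by Proposition \ref{trei}(b). Your identification of the three-zeros matrices with the $rE_{ij}$ and the $Y=0$ remark for units are both sound.
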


Recall that a commutative ring is called \textsl{B\'{e}zout} if any two
elements have a greatest common divisor that is a linear combination of them.

Our second main result is the following

\begin{theorem}
\label{nm}Let $R$ be any B\'{e}zout ring. All matrices in $\mathbb{M}_{2}(R)$
with (at least) one zero row or zero column have sr1.
\end{theorem}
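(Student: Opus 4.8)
The goal is to show that any $2\times 2$ matrix over a Bézout ring $R$ with a zero row or zero column has sr1. The plan is to reduce everything to the case of a matrix of the form $\mathrm{diag}(r,0)$ type, or more precisely to a matrix with a single zero row, and then invoke the structural results already established. First I would use Lemma \ref{trans}(i) together with the left-right symmetry Corollary to note that having a zero column versus a zero row are interchangeable under transposition; so without loss of generality I may treat, say, the case of a zero second row, i.e. $A=\left[\begin{smallmatrix} a & b \\ 0 & 0\end{smallmatrix}\right]$ (and its transpose/permutation variants).

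The key idea is to exploit the Bézout hypothesis to simplify $A$ by equivalence, since Lemma \ref{unu}(v) guarantees that sr1 is invariant under equivalence. Given the row $(a,b)$, let $g=\gcd(a,b)$, which by the Bézout property can be written as $g=\alpha a+\beta b$ for some $\alpha,\beta\in R$, and $a=ga'$, $b=gb'$ with $\gcd(a',b')$ a unit (so that the vector $(a',b')$ is unimodular). The plan is to build an invertible matrix $V\in\mathbb{M}_2(R)$ whose first column is $\left[\begin{smallmatrix}\alpha\\\beta\end{smallmatrix}\right]$ (completable to a unit precisely because $(\alpha,\beta)$ can be chosen unimodular over a Bézout ring), so that right-multiplication $AV$ collapses the row $(a,b)$ to $(g,0)$. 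Thus $A$ is equivalent to $\left[\begin{smallmatrix} g & 0\\ 0 & 0\end{smallmatrix}\right]=gE_{11}$.

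Once the reduction to $gE_{11}$ (equivalently, after permutations, to any $rE_{ij}$) is in place, the theorem follows immediately from Proposition \ref{trei}(b), which asserts that every matrix of the form $rE_{ij}$ has sr1, combined with the equivalence-invariance in Lemma \ref{unu}(v). In short: zero-column case $\to$ transpose to zero-row case $\to$ Bézout reduction by equivalence to $gE_{11}$ $\to$ apply Proposition \ref{trei}(b). I would also double-check the degenerate subcases ($a=b=0$, giving the zero matrix, which is trivially sr1; or one of $a,b$ already a unit) to confirm the reduction goes through uniformly.

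The main obstacle I anticipate is the algebraic step of producing the invertible matrix $V$ that performs the collapse, i.e. verifying that over a Bézout ring the unimodular row $(a',b')$ extends to an invertible $2\times 2$ matrix. This is the one place where the Bézout hypothesis does real work, and it requires knowing that in a Bézout ring the Bézout coefficients $\alpha,\beta$ realizing $\gcd(a,b)=\alpha a+\beta b$ together with $a',b'$ assemble into a matrix $\left[\begin{smallmatrix}\alpha & -b'\\ \beta & a'\end{smallmatrix}\right]$ (or similar) of determinant $\alpha a'+\beta b'=1$, hence a unit. Establishing that this determinant is exactly $1$ is the crux; the rest is bookkeeping handled by the earlier lemmas. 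Everything else—transposition, permutation, and the final appeal to Proposition \ref{trei}—is routine given the machinery already developed in the excerpt.
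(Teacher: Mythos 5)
Your proof is correct in substance but follows a genuinely different route from the paper's. After the same reduction to a zero second row, the paper does \emph{not} diagonalize $A$ by equivalence; instead it splits into two cases using its characterization theorem (Theorem \ref{lsr1}): when $\gcd(r,s)=1$ it reads off an explicit unitizer $Y=\left[\begin{smallmatrix}0&0\\z_{0}&t_{0}\end{smallmatrix}\right]$ from the Bézout identity $-rt_{0}+sz_{0}=1$, and when the gcd is a nonunit $d$ it factors $A=\mathrm{diag}(d,0)\cdot\left[\begin{smallmatrix}r&s\\0&0\end{smallmatrix}\right]$ with $r,s$ coprime and invokes Proposition \ref{lam} (sr1 is closed under products) together with Proposition \ref{trei}(b). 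Your argument instead completes the unimodular row $(a',b')$ to an invertible $V$ and reduces $A$ by equivalence to $gE_{11}$, then applies Lemma \ref{unu}(v) and Proposition \ref{trei}(b); this bypasses both the characterization theorem and Proposition \ref{lam}, is arguably cleaner, and actually proves the stronger fact that $A$ is equivalent to $gE_{11}$ (in the spirit of Remark 4, which notes the coprime case via unit-regularity). What it gives up is exactly what the paper is at pains to exhibit: explicit unitizers, which are the point of Section 3.

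One caveat you correctly flag yourself: your construction needs $\det V=\alpha a'+\beta b'$ to be a unit. From $g=\alpha a+\beta b$ one only gets $g(\alpha a'+\beta b'-1)=0$, so the cancellation is automatic over a Bézout domain (the case $g=0$ being the zero matrix, trivially sr1) but over a Bézout ring with zero divisors the existence of a factorization $a=ga'$, $b=gb'$ with $(a',b')$ unimodular is the Hermite condition, which is stronger than Bézout in general. Note, however, that the paper's own case (ii) assumes exactly the same factorization (``let $r,s$ be coprime''), so your proof is no less rigorous than the published one on this point; it just relocates the assumption from the case split to the completion of $V$.
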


\begin{proof}
By Lemma \ref{trans}, it suffices to prove the claim for matrices with zero
second row.

Let $A=\left[ 
\begin{array}{cc}
r & s \\ 
0 & 0%
\end{array}%
\right] $ with $r,s\in R$. Replacement in the characterization theorem ($%
a_{11}=r$, $a_{12}=s$, $\det (A)=0$) amounts to 
\begin{equation*}
\det (Y)(1-ra-sc)-rt+sz=\pm 1.
\end{equation*}%
We go into two cases: (i) $\gcd (r;s)=1$, and (ii) $\gcd (r;s)\neq 1$.

\textbf{(i)} Since $r,s$ are coprime, $z,t$ can be chosen for $-rt+sz=1$
(say $z_{0},t_{0})$. Choosing $x=y=0$ (and so $\det (Y)=0$) we get a
unitizer of form $Y=\left[ 
\begin{array}{cc}
0 & 0 \\ 
z_{0} & t_{0}%
\end{array}%
\right] $ (which is independent of $a,b,c,d$).

\textbf{(ii)} In the remaining case, suppose $1\neq d$ and let $r,s$ be
coprime. Then $A=\left[ 
\begin{array}{cc}
dr & ds \\ 
0 & 0%
\end{array}%
\right] =\left[ 
\begin{array}{cc}
d & 0 \\ 
0 & 0%
\end{array}%
\right] \left[ 
\begin{array}{cc}
r & s \\ 
0 & 0%
\end{array}%
\right] $ is sr1, as product of sr1 matrices (by Proposition \ref{trei} (b)
and (i), this theorem).
\end{proof}

\textbf{Remarks}. 1) Matrices of the form $\left[ 
\begin{array}{cc}
a & ab \\ 
0 & 0%
\end{array}%
\right] $ are sr1 over any (possibly not commutative) ring $R$. To see this,
we decompose $\left[ 
\begin{array}{cc}
a & ab \\ 
0 & 0%
\end{array}%
\right] =\left[ 
\begin{array}{cc}
a & 0 \\ 
0 & 0%
\end{array}%
\right] \left[ 
\begin{array}{cc}
1 & b \\ 
0 & 0%
\end{array}%
\right] $. Both are sr1 by Proposition \ref{trei} (the right one is
idempotent) and the fact that $sr1(R)$ is multiplicatively closed.

2) When it comes to find a unitizer in case (ii), the difficulties which
occur are described in the next section, on a special case.

3) For $r=s$, one can also use another unitizer: $Y=\left[ 
\begin{array}{cc}
1 & 0 \\ 
a+c+1 & 1%
\end{array}%
\right] $. Similarly, for $r=-s$: $Y=\left[ 
\begin{array}{cc}
1 & 0 \\ 
-a+c-1 & 1%
\end{array}%
\right] $.

4) If $r$, $s$ are co-prime then $\left[ 
\begin{array}{cc}
r & s \\ 
0 & 0%
\end{array}%
\right] $ is unit regular (see \cite{khu}) and thus has stable range one.
This is an alternative proof of case (i) of the previous theorem (not
providing unitizers).

\bigskip

In our third main result, we characterize the integral sr1 matrices. We
first discuss a special case.

\begin{lemma}
\label{di}An integral diagonal matrix $A=nI_{2}$ has sr1 iff $n\in
\{-1,0,1\} $.
\end{lemma}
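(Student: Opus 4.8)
The statement is an equivalence, so the plan is to treat the two implications separately. The easy direction $n\in\{-1,0,1\}\Rightarrow nI_{2}$ has sr1 needs only the facts collected at the start of this section: $0I_{2}=0$ has sr1, while $I_{2}$ and $-I_{2}$ are units and units have sr1. By Lemma~\ref{unu}(ii), $nI_{2}$ has sr1 iff $-nI_{2}=-(nI_{2})$ does, so for the converse it suffices to rule out sr1 when $n\geq 2$.

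For the converse I would argue through Theorem~\ref{lsr1} (equivalently, by directly evaluating $\det(A+Y(XA-I_{2}))$), exhibiting a single $X$ for which no unitizer $Y$ exists. The key idea is to feed the characterization a \emph{scalar} matrix $X=cI_{2}$: then $XA-I_{2}=(nc-1)I_{2}$ is again scalar and the whole expression collapses. Writing $m=nc-1$, for every $Y\in\mathbb{M}_{2}(\mathbb{Z})$ one gets
\begin{equation*}
\det\bigl(A+Y(XA-I_{2})\bigr)=\det(nI_{2}+mY)=n^{2}+nm\,\mathrm{Tr}(Y)+m^{2}\det(Y),
\end{equation*}
which is congruent to $n^{2}$ modulo $m$, uniformly in $Y$. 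Since the units of $\mathbb{Z}$ are $\pm 1$, a matrix in $\mathbb{M}_{2}(\mathbb{Z})$ is a unit iff its determinant is $\pm 1$, so it remains only to pick $c$ so that $n^{2}\not\equiv\pm 1\pmod{m}$.

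Taking $c=-(n+1)$, hence $m=-(n^{2}+n+1)$, does exactly this. Modulo $g:=n^{2}+n+1$ the determinant is always $\equiv n^{2}$; since $0<n^{2}<g$ for $n\geq 2$ we have $n^{2}\not\equiv 1$, while $n^{2}\equiv -1\pmod g$ would force $g\mid n^{2}+1$, impossible as $0<n^{2}+1<g$. Thus $X=-(n+1)I_{2}$ admits no unitizer, and $nI_{2}$ fails sr1 for $n\geq 2$, completing the converse.

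I expect the only real obstacle to be the choice of the modulus. Many natural candidates for $X$ (such as $0$, $I_{2}$, or $E_{11}$) do admit unitizers, so the failure of sr1 is genuinely hidden; even the scalar choice $c=n+1$ breaks down at $n=2$, where $m=n^{2}+n-1=n^{2}+1$ divides $n^{2}+1$. The value $c=-(n+1)$ is what pushes the modulus $g=n^{2}+n+1$ strictly above $n^{2}+1$, and this strict inequality is precisely what guarantees $n^{2}\not\equiv\pm 1$ and closes the argument.
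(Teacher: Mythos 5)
Your proof is correct and follows essentially the same route as the paper: the same witness $X=-(n+1)I_{2}$, reduction modulo $1+n+n^{2}$, and the observation that $n^{2}\not\equiv\pm 1$ for $n\geq 2$. You merely spell out the determinant expansion $\det(nI_{2}+mY)=n^{2}+nm\,\mathrm{Tr}(Y)+m^{2}\det(Y)$ and the size comparison with the modulus more explicitly than the paper does.
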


\begin{proof}
According to the remark after Lemma 3, suppose $1\leq n$. For every multiple
of $I_{2}$, we have to indicate an $X$ for which no $Y$ exists such that $%
A+Y(XA-I_{2})$ has determinant $\pm 1$.

Since for $n=1$, $I_{2}$ is a unit, we take $A=nI_{2}$ for $n\geq 2$ and
consider $X=-(n+1)I_{2}$. Then $Y(XA-I_{2})=-(1+n+n^{2})Y$ and we can
compute the determinant in the ring $\mathbb{Z}/(1+n+n^{2})\mathbb{Z}$. The
characterization becomes $n^{2}$ congruent to ${\pm 1}$ mod $(1+n+n^{2})$,
which is impossible since $n\geq 2$. Hence multiples $nI_{2}$ with $n\geq 2$
have not sr1.
\end{proof}

Since units are known to have sr1, we have the following

\begin{theorem}
\label{sr1}Let $R=\mathbb{M}_{2}(\mathbb{Z})$. Then a matrix $A\in
R\setminus U(R)$ has sr1 iff $\det (A)=0$.
\end{theorem}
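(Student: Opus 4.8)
The theorem characterizes non-unit integral $2\times 2$ matrices with stable range 1 as precisely those with zero determinant. Since $R = \mathbb{M}_2(\mathbb{Z})$ and $\mathbb{Z}$ is a PIR, hence an elementary divisor ring, Proposition \ref{ele} reduces the whole question to the Smith normal form of $A$. Thus the plan is to work entirely with diagonal matrices $\mathrm{diag}(n,m)$ and, using the Remark after Lemma \ref{trans}, assume $0 \le n \le m$. The argument then splits according to whether $n = 0$ or $n \ge 1$.

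**The easy direction ($\det(A) = 0$).** When $\det(A) = 0$, the Smith normal form is $\mathrm{diag}(0, m)$ for some $m \ge 0$. This matrix has a zero row (and a zero column), so Theorem \ref{nm} applies directly (with $R = \mathbb{Z}$, which is B\'{e}zout): every such matrix has sr1. By Proposition \ref{ele}, $A$ itself has sr1. This half is immediate given the earlier results.

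**The hard direction ($A \notin U(R)$ with $\det(A) \ne 0$).** Here I must show $A$ does \emph{not} have sr1. Reduce to the Smith form $\mathrm{diag}(n,m)$ with $1 \le n \le m$ and $n \mid m$. The main obstacle is that Lemma \ref{di} only handles the scalar case $n = m$ (i.e. $nI_2$), so I need to extend the non-existence-of-unitizer argument to genuinely unequal diagonal entries. The natural route is to imitate the proof of Lemma \ref{di}: exhibit a single bad $X$ and show that $\det(\mathrm{diag}(n,m) + Y(X\,\mathrm{diag}(n,m) - I_2))$ can never equal $\pm 1$ for any $Y$. I expect the cleanest choice is again a scalar $X = cI_2$ chosen so that $X\,\mathrm{diag}(n,m) - I_2 = \mathrm{diag}(cn-1, cm-1)$ has both diagonal entries sharing a common factor $g > 1$; then $Y(X\,\mathrm{diag}(n,m) - I_2)$ lies in a matrix whose determinant I can control modulo $g$, forcing the full determinant into a fixed residue class mod $g$ that excludes $\pm 1$. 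Verifying that such a $c$ exists whenever $m \ge 2$ (equivalently $A$ is not a unit, since $\det(A) = \pm 1$ with $n \mid m$ and $n \le m$ forces $n = m = 1$) is the crux; the congruence bookkeeping generalizing the $\mathbb{Z}/(1+n+n^2)\mathbb{Z}$ computation of Lemma \ref{di} is where the real work lies.

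**Assembly.** Combining the two directions: if $A \notin U(R)$ then either $\det(A) = 0$, giving sr1 by the first part, or $\det(A) \ne 0$, in which case the Smith form has $m \ge 2$ and the second part shows $A$ lacks sr1. Hence a non-unit $A$ has sr1 if and only if $\det(A) = 0$, which is the statement. The one step I anticipate needing care is ensuring the chosen bad $X$ in the hard direction works uniformly for all admissible $(n,m)$ with $m \ge 2$; if a single scalar choice does not suffice, I would fall back on the factorization $\mathrm{diag}(n,m) = \mathrm{diag}(1,m/n \cdot \text{(unit)}) \cdot \mathrm{diag}(n, \dots)$ style reductions, or handle $n \mid m$ by first treating $\mathrm{diag}(1,m)$ and then scaling, though the direct congruence argument modeled on Lemma \ref{di} should be the primary line.
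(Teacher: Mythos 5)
Your easy direction is fine: reducing to the Smith normal form via Proposition \ref{ele} and observing that $\mathrm{diag}(0,m)$ has sr1 (you cite Theorem \ref{nm}; the paper cites Proposition \ref{trei}(b) since $\mathrm{diag}(0,m)=mE_{22}$, but either works). The hard direction, however, contains a genuine gap: you explicitly defer the essential step --- showing $\mathrm{diag}(n,m)$ with $1\le n\le m$ and $nm\ge 2$ fails sr1 --- calling the congruence bookkeeping ``the crux'' and ``where the real work lies'' without carrying it out. Worse, the specific strategy you propose, a scalar $X=cI_2$ making the entries of $X\,\mathrm{diag}(n,m)-I_2=\mathrm{diag}(cn-1,cm-1)$ share a factor $g>1$, provably cannot work in general: any common divisor $g$ of $cn-1$ and $cm-1$ is coprime to $c$ and hence divides $m-n$, so for $\mathrm{diag}(1,2)$ one has $\gcd(c-1,2c-1)=1$ for every $c$ and no modulus is available; and even when $n=1$ and $g\mid m-1$ exists, the determinant is forced into the class $nm\equiv 1\pmod g$, which does not exclude the value $1$. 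The trick of Lemma \ref{di} genuinely depends on $n=m$.

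The paper sidesteps all of this with a short algebraic argument you missed: if $\mathrm{diag}(n,m)$ had sr1, then so would $\mathrm{diag}(m,n)$ by Lemma \ref{trans}(ii), hence so would their product $nmI_2$ by Proposition \ref{lam} (finite products of sr1 elements are sr1), contradicting Lemma \ref{di} since $nm\ge 2$. This reduces the general diagonal case back to the scalar case already settled, with no new determinant computation. If you insist on a direct unitizer-free argument, a workable choice is a non-scalar diagonal $X=\mathrm{diag}(p,q)$ with $pn\equiv 1$ and $qm\equiv 1$ modulo a prime $\pi$ coprime to $nm(nm-1)(nm+1)$, which forces $\det(A+Y(XA-I_2))\equiv nm\not\equiv\pm 1\pmod\pi$ for every $Y$; but as written your proposal does not establish the impossibility half of the theorem.
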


\begin{proof}
As mentioned before, since $\mathbb{Z}$ is an elementary divisor ring, by
Proposition \ref{ele}, we may assume that $A$ is diagonal, say $\mathrm{diag}%
(n,m)$, where $m,n\geq 0$. If $\det (A)=0$, then Proposition \ref{trei}, (b)
shows that $A$ has sr1. Conversely, assume that $\det (A)\neq 0$. Then $%
m,n\geq 1$. If $A$ has sr1, by Lemma \ref{trans}, (ii), $\mathrm{diag}(m,n)$
has also sr1, and so is their product, $nmI_{2}$. As $A\notin U(R)$, we have 
$mn\geq 2$ and this is impossible by the previous lemma.
\end{proof}

\begin{corollary}
In $\mathbb{M}_{2}(\mathbb{Z})$ all idempotents and all nilpotents have
stable range 1.
\end{corollary}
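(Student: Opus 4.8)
The plan is to derive this corollary directly from Theorem~\ref{sr1}, since that theorem already classifies all non-unit matrices over $\mathbb{Z}$ with stable range 1 as exactly those of determinant zero. So the whole task reduces to checking two things: that every idempotent and every nilpotent in $\mathbb{M}_{2}(\mathbb{Z})$ is either a unit or has determinant $0$.

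First I would dispose of the nilpotents. If $A\in\mathbb{M}_{2}(\mathbb{Z})$ is nilpotent, then $A^{k}=0$ for some $k$, whence $\det(A)^{k}=\det(A^{k})=0$, forcing $\det(A)=0$ in the integral domain $\mathbb{Z}$. A nilpotent matrix is never a unit (a unit has a unit determinant), so $A\in R\setminus U(R)$ with $\det(A)=0$, and Theorem~\ref{sr1} immediately gives that $A$ has sr1.

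Next I would handle the idempotents. If $A^{2}=A$, then $\det(A)^{2}=\det(A)$, so $\det(A)\in\{0,1\}$. If $\det(A)=0$, then $A\in R\setminus U(R)$ and Theorem~\ref{sr1} applies as before. If instead $\det(A)=1$, then $A$ is idempotent with nonzero determinant; from $A^{2}=A$ and cancellation of the invertible factor (valid since $\det(A)=1$ makes $A$ a unit in $\mathbb{M}_{2}(\mathbb{Z})$) we get $A=I_{2}$, which is a unit and hence has sr1 by the general fact, noted at the start of Section~2, that all units have sr1. Alternatively one may simply invoke Proposition~\ref{trei}(a), which states that idempotents have sr1 over \emph{any} ring, bypassing the determinant analysis entirely for the idempotent case.

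The only mild subtlety, rather than a genuine obstacle, is bookkeeping the dichotomy between the unit case and the determinant-zero case, since Theorem~\ref{sr1} is phrased only for $A\in R\setminus U(R)$ and says nothing about units directly; one must separately recall that units have sr1. For idempotents this is cleanest handled by citing Proposition~\ref{trei}(a) outright, which covers both the $I_{2}$ and the determinant-zero subcases uniformly. For nilpotents the argument is entirely self-contained through the determinant computation above.
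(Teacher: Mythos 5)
Your proposal is correct and follows the same route the paper intends: the corollary is stated immediately after Theorem~\ref{sr1} precisely because nilpotents have determinant $0$ and non-identity idempotents have determinant $0$ (with the unit case $I_{2}$, or equivalently Proposition~\ref{trei}(a), covering the rest). Nothing is missing.
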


\textbf{Remarks}. 1) Naturally, for $R=\mathbb{Z}$, Theorem \ref{nm} follows
since all matrices have zero determinant.

2) Matrices $M_{uv}=\left[ 
\begin{array}{cc}
1 & u \\ 
v & uv%
\end{array}%
\right] $ have sr1 over any commutative ring.

Indeed, having zero determinant, the characterization yields $\det
(Y)(1-a-vb-uc-uvd)-t+uz+vy-uvx=\pm 1$ for which the unitizer

$Y=\left[ 
\begin{array}{cc}
0 & 1 \\ 
-1 & 1-a-vb-uc-uvd%
\end{array}%
\right] $ gives $+1$. Again, for $R=\mathbb{Z}$, this follows since such
matrices have zero determinant.

3) The results in this section yield simple examples which show that $sr1(%
\mathbb{M}_{2}(\mathbb{Z}))$ is \emph{not} closed under addition.

Indeed, $E_{11}$, $I_{2}$ both are sr1 but the (diagonal) sum is not.

4) The previous characterization can be also obtained as follows. As every
matrix over $\mathbb{Z}$ has a Smith normal form and stable range one is
invariant under multiplication by units, it suffices to see which matrices
of the form $\left[ 
\begin{array}{cc}
a & 0 \\ 
0 & b%
\end{array}%
\right] $ over $\mathbb{Z}$ have stable range one. It is not hard to see
that this is the case when either one of the $a$, $b$ is zero or both $a$
and $b$ are units.

\bigskip

Another consequence of the previous theorem is the following

\begin{corollary}
(i) In $\mathbb{M}_{2}(\mathbb{Z})$, stable range 1 elements do not have the
"complementary property".

(ii) In $\mathbb{M}_{2}(\mathbb{Z})$, $AB$ has stable range 1 iff so has $BA$%
.

(iii) Jacobson's Lemma holds for stable range 1 matrices in $\mathbb{M}_{2}(%
\mathbb{Z})$.
\end{corollary}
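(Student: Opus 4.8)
The plan is to reduce all three parts to a single numerical criterion drawn from Theorem \ref{sr1}. Combining that theorem with the facts that units always have sr1 and that a matrix in $\mathbb{M}_{2}(\mathbb{Z})$ is a unit precisely when its determinant is $\pm 1$, I would first record the clean restatement: \emph{a matrix} $A\in \mathbb{M}_{2}(\mathbb{Z})$ \emph{has stable range 1 iff} $\det (A)\in \{-1,0,1\}$. Indeed, if $\det (A)=\pm 1$ then $A\in U(R)$ and so $A$ has sr1; if $\det (A)=0$ then Theorem \ref{sr1} gives sr1; and if $\det (A)\notin \{-1,0,1\}$ then $A\notin U(R)$ with $\det (A)\neq 0$, so $A$ fails to have sr1 by Theorem \ref{sr1}. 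Each subsequent step is then a determinant computation feeding into this equivalence.

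For (i), the complementary property is the assertion that $A$ has sr1 iff $I_{2}-A$ has sr1, as is the case for clean elements. To refute it I would exhibit one matrix where it fails: take $A=\mathrm{diag}(0,3)$, so that $\det (A)=0$ and $A$ has sr1, while $I_{2}-A=\mathrm{diag}(1,-2)$ has $\det (I_{2}-A)=-2\notin \{-1,0,1\}$, so $I_{2}-A$ does not have sr1. Any $\mathrm{diag}(0,n)$ with $n\geq 3$ serves equally well.

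For (ii), since $\mathbb{Z}$ is commutative the determinant is multiplicative, whence $\det (AB)=\det (A)\det (B)=\det (B)\det (A)=\det (BA)$. Therefore $\det (AB)\in \{-1,0,1\}$ iff $\det (BA)\in \{-1,0,1\}$, and the restatement converts this directly into the claim that $AB$ has sr1 iff $BA$ has sr1. For (iii), Jacobson's Lemma for sr1 reads: $I_{2}-AB$ has sr1 iff $I_{2}-BA$ has sr1. Using $\det (I_{2}-M)=1-\mathrm{Tr}(M)+\det (M)$ for a $2\times 2$ matrix $M$, together with the standard identities $\mathrm{Tr}(AB)=\mathrm{Tr}(BA)$ and $\det (AB)=\det (BA)$, I obtain $\det (I_{2}-AB)=\det (I_{2}-BA)$, and the restatement again closes the argument.

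The hard part is really only (i), and even there the difficulty is conceptual rather than computational: pinning down exactly what the complementary property asserts and choosing the matrix so that the asymmetry between $A$ and $I_{2}-A$ is transparent. Parts (ii) and (iii) are immediate once the numerical criterion is available, resting solely on the elementary facts $\det (AB)=\det (BA)$ and $\mathrm{Tr}(AB)=\mathrm{Tr}(BA)$.
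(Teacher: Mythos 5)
Your proof is correct and follows essentially the same route as the paper: reduce all three parts to the criterion $\det(A)\in\{-1,0,1\}$ from Theorem \ref{sr1} and then use $\det(AB)=\det(BA)$, $\mathrm{Tr}(AB)=\mathrm{Tr}(BA)$ and $\det(I_{2}-M)=1+\det(M)-\mathrm{Tr}(M)$. The only (immaterial) difference is the counterexample in (i): you use $A=\mathrm{diag}(0,3)$ with $I_{2}-A=\mathrm{diag}(1,-2)$, while the paper uses $A=-I_{2}$ with $I_{2}-A=2I_{2}$.
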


\begin{proof}
(i) Indeed, in $\mathbb{M}_{2}(\mathbb{Z})$, $-I_{2}$ is a unit, so has sr1.
However, $I_{2}-(-I_{2})=2I_{2}$ has not sr1.

(ii) Suppose $AB$ has sr1, i.e. $\det (AB)\in \{-1,0,1\}$. Then $\det
(BA)\in \{-1,0,1\}$.

(iii) In $\mathbb{M}_{2}(\mathbb{Z})$ we have to verify whether $\det
(I_{2}-AB)\in \{-1,0,1\}$ implies $\det (I_{2}-BA)\in \{-1,0,1\}$. Since $%
\det (I_{2}-M)=1+\det (M)-\mathrm{Tr}(M)$ holds for any $2\times 2$ matrix $%
M $, we deduce $\det (I_{2}-AB)=1+\det (AB)-\mathrm{Tr}(AB)=1+\det
(BA)-Tr(BA)=\det (I_{2}-BA)$ and so the claim follows.
\end{proof}

Jacobson's Lemma holds for units, regular or unit-regular elements, $\pi $%
-regular or strongly $\pi $-regular elements.

Since unit-regular elements have stable range 1, at least \emph{for this
subset}, Jacobson's Lemma generally holds.

\section{Finding unitizers}

In Theorem \ref{nm}, the unitizers were found by observation (with computer
aid). In this section we show that in case (v), finding a unitizer is not so
easy. We focus on $A=\left[ 
\begin{array}{cc}
6 & 10 \\ 
0 & 0%
\end{array}%
\right] =\left[ 
\begin{array}{cc}
2 & 0 \\ 
0 & 0%
\end{array}%
\right] \left[ 
\begin{array}{cc}
3 & 5 \\ 
0 & 0%
\end{array}%
\right] $ which is sr1, as product of sr1 matrices.

For any given $X=\left[ 
\begin{array}{cc}
a & b \\ 
c & d%
\end{array}%
\right] $ we are looking for a unitizer $Y=\left[ 
\begin{array}{cc}
x & y \\ 
z & t%
\end{array}%
\right] $, such that $\det (Y)(1-6a-10c)-6t+10z$ is $1$ or $-1$.

\bigskip

Having some examples in Theorem \ref{nm}, we made by computer some attempts:
to look for unitizers with zero first row, or for unitizers with $x=t=1$ and 
$y=0$. Since these did not (seem to) cover all situations (\textquotedblleft
seem\textquotedblright , because of given bounds for the entries of $X$ and $%
Y$, respectively), we decided to concentrate on unitizers (already used in
some cases) of form $Y=\left[ 
\begin{array}{cc}
0 & \pm 1 \\ 
\mp 1 & t%
\end{array}%
\right] $, that is, with $\det (Y)=1$, where $t$ is to be found depending on 
$a$ and $c$, since computer seemed to cover all situations.

We go into two cases.

\textbf{Case 1}. $z=-1$, $x=0$, $y=1$. The characterization gives $%
1-6a-10c-6t-10=\pm 1$, which we split into two subcases.

(a) $1-6a-10c-6t-10=1$, which we write $3t+5c+3a=-5$. This is a linear
Diophantine equation with three unknowns, solvable since $\gcd (3;5;3)=1$
divides $-5$. The general solution is $t=5+5k-m$, $c=-4-3k$, $a=m$, whence $%
t=5(k+1)-a$ and $\ c\equiv 2$ (mod 3).

(b) $1-6a-10c-6t-10=-1$, which we write $3t+5c+3a=-4$. Again a Diophantine
equation; the general solution is $t=2+5k-m$, $c=-2-3k$, $a=m$, whence $%
t=2+5k-a$ and $c\equiv 1$ (mod 3).

\textbf{Case 2}. $z=1$, $x=0$, $y=-1$. The characterization gives $%
1-6a-10c-6t+10=\pm 1$, which again we split into two subcases.

(i) $1-6a-10c-6t+10=1$, which we write $3t+5c+3a=5$. The general solution is 
$t=5k-m$, $c=1-3k$, $a=m$, whence $t=5k-a$ and $c\equiv 1$ (mod 3).

(ii) $1-6a-10c-6t+10=-1$, which we write $3t+5c+3a=6$. The general solution
is $t=2+5k-m$, $c=-3k$, $a=m$, whence $t=2+5k-a$ and $c\equiv 0$ (mod 3).

Therefore, there are (slightly) different unitizers, corresponding to the
reminder of the division of $c$ by $3$. More precisely,

if $c\equiv 0$ (mod 3) the unitizer is $Y=\left[ 
\begin{array}{cc}
0 & -1 \\ 
1 & 2-a-\dfrac{5}{3}c%
\end{array}%
\right] $,

if $c\equiv 1$ (mod 3) there are two possible unitizers: $Y=\left[ 
\begin{array}{cc}
0 & -1 \\ 
1 & \dfrac{5}{3}-a-\dfrac{5}{3}c%
\end{array}%
\right] $, or $Y=\left[ 
\begin{array}{cc}
0 & 1 \\ 
-1 & -\dfrac{4}{3}-a-\dfrac{5}{3}c%
\end{array}%
\right] $,

if $c\equiv 2$ (mod 3) the unitizer is $Y=\left[ 
\begin{array}{cc}
0 & 1 \\ 
-1 & -\dfrac{5}{3}-a-\dfrac{5}{3}c%
\end{array}%
\right] $.

\section{Exchange stable range 1, $2\times 2$ matrices, may not be clean}

Given an exchange ring, for this to be clean, one needs units in addition to
idempotents. The stable range 1 condition is known to be excellent in
helping to produce units. That is why it is natural to raise the following

\textbf{Question}: \emph{If an exchange ring }$R$\emph{\ has stable range
one, is }$R$\emph{\ necessarily a clean ring?}

In what follows, using computer aid, we show that, \emph{at element level},
this fails for $R=\mathbb{M}_{2}(\mathbb{Z})$. Clearly, the existence of
such examples should not entirely dash our hopes for a positive answer to
this question. This is because, in working with the question, we are under
the stronger assumption that all (not just some) elements of $R$ are
exchange and have stable range one.

\bigskip

We shall use the following (for a proof, see e.g. Theorem 3, \cite{cal2})

\begin{theorem}
A $2\times 2$ integral matrix $A=\left[ 
\begin{array}{cc}
a & b \\ 
c & d%
\end{array}%
\right] $ is nontrivial clean iff the system%
\begin{equation*}
\left\{ 
\begin{array}{c}
x^{2}+x+yz=0\ \ \ \ \ \ \ (1) \\ 
(a-d)x+cy+bz+\det (A)-d=\pm 1\ \ \ \ (\pm 2)%
\end{array}%
\right.
\end{equation*}%
with unknowns $x,y,z$, has at least one solution over $\mathbb{Z}$. If $%
b\neq 0$ and (2) holds, then (1) is equivalent to%
\begin{equation*}
bx^{2}-(a-d)xy-cy^{2}+bx+(d-\det (A)\pm 1)y=0\ \ \ \ (\pm 3).
\end{equation*}
\end{theorem}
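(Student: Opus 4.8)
The plan is to unpack the definition directly: $A$ is nontrivially clean precisely when $A=E+U$ for some idempotent $E\neq 0,I_{2}$ and some unit $U\in\mathbb{M}_{2}(\mathbb{Z})$. Writing $U=A-E$ and recalling that $U$ is a unit iff $\det(U)=\pm 1$, the statement reduces to the assertion that there exists a nontrivial idempotent $E$ with $\det(A-E)=\pm 1$. So everything comes down to describing the nontrivial idempotents of $\mathbb{M}_{2}(\mathbb{Z})$ and then evaluating $\det(A-E)$.

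First I would pin down the nontrivial idempotents. Combining $E^{2}=E$ with the Cayley--Hamilton identity $E^{2}-\mathrm{Tr}(E)E+\det(E)I_{2}=0$ yields $(\mathrm{Tr}(E)-1)E=\det(E)I_{2}$. If $\mathrm{Tr}(E)\neq 1$ this forces $E$ to be scalar, hence $0$ or $I_{2}$; so every nontrivial idempotent satisfies $\mathrm{Tr}(E)=1$ and $\det(E)=0$. I would therefore write the $(1,1)$ entry of $E$ as $1+x$, the $(2,2)$ entry as $-x$ (so that $\mathrm{Tr}(E)=1$ automatically), and the off-diagonal entries as $y$ and $z$. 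The condition $\det(E)=0$ then reads $(1+x)(-x)-yz=0$, i.e. $x^{2}+x+yz=0$, which is equation (1); conversely every integer triple solving (1) produces a nontrivial idempotent of this shape, so the correspondence is exact.

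Next I would evaluate the determinant. Expanding $\det(A-E)$ for a general $E$ and using $\det(E)=0$ gives $\det(A-E)=\det(A)-a\,e_{22}-d\,e_{11}+b\,e_{21}+c\,e_{12}$, where $e_{ij}$ denote the entries of $E$; substituting $e_{11}=1+x$, $e_{22}=-x$, $e_{12}=y$, $e_{21}=z$ collapses this to $\det(A)-d+(a-d)x+cy+bz$. Requiring it to equal $\pm 1$ is exactly $(\pm 2)$. Combining the two computations shows that $A$ is nontrivially clean iff the system consisting of (1) and $(\pm 2)$ has an integer solution, which is the first assertion.

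For the final equivalence I would assume $b\neq 0$ and that $(\pm 2)$ holds, solve $(\pm 2)$ for the product $bz=d-\det(A)\pm 1-(a-d)x-cy$, multiply (1) through by $b$ to obtain $bx^{2}+bx+y(bz)=0$, and substitute; the outcome is precisely $(\pm 3)$. Since $\mathbb{Z}$ is a domain and $b\neq 0$, multiplying (1) by $b$ discards no information, so (1) and $(\pm 3)$ are equivalent whenever $(\pm 2)$ holds. I expect no deep obstacle: the only genuinely delicate points are the sign bookkeeping needed so that the chosen parametrization of $E$ reproduces the stated equations verbatim (in particular taking $e_{11}=1+x$, $e_{22}=-x$ and not the reverse), and making sure the Cayley--Hamilton step really excludes any exotic nontrivial idempotents over $\mathbb{Z}$, which it does because an idempotent scalar matrix over a domain must be $0$ or $I_{2}$.
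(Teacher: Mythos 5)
Your proof is correct and follows exactly the route the paper intends (the paper itself only cites Theorem 3 of C\u{a}lug\u{a}reanu's \emph{Clean integral $2\times 2$ matrices} for the proof, but its remark that $E=\left[\begin{smallmatrix} x+1 & y \\ z & -x\end{smallmatrix}\right]$ is the idempotent of the clean decomposition confirms your parametrization of nontrivial idempotents by $\mathrm{Tr}(E)=1$, $\det(E)=0$ via Cayley--Hamilton). The determinant expansion giving $(\pm 2)$ and the substitution $bz=d-\det(A)\pm 1-(a-d)x-cy$ into $b\cdot(1)$ to get $(\pm 3)$, using that $b\neq 0$ in the domain $\mathbb{Z}$, are all sound.
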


Here $E=\left[ 
\begin{array}{cc}
x+1 & y \\ 
z & -x%
\end{array}%
\right] $ is the (nontrivial) idempotent of a clean decomposition of $A$
(i.e., $A-E$ is a unit).

While it is easy to use this characterization of clean matrices, and the
characterization of sr1 matrices, it is hard to check the exchange property
for matrices. Therefore, computer aid was again necessary.

\bigskip

Our example is $A=\left[ 
\begin{array}{cc}
5 & 5 \\ 
7 & 7%
\end{array}%
\right] $.

\textbf{1}. $A$ is exchange. Indeed, $A+M(A-A^{2})=\left[ 
\begin{array}{cc}
5 & 5 \\ 
-4 & -4%
\end{array}%
\right] $ is an idempotent (determinant = zero, trace = 1) for $M=\left[ 
\begin{array}{cc}
0 & 0 \\ 
3 & -2%
\end{array}%
\right] $.

\textbf{2}. $A$ is \emph{not} clean. We use the theorem above: for $a=b=5$, $%
c=d=7$ (and $\det A=0$) the Diophantine equations are

\begin{equation*}
5x^{2}+2xy-7y^{2}+5x+(7\pm 1)y=0\text{ \ \ \ \ }(\pm 3)
\end{equation*}%
and the corresponding linear equations are 
\begin{equation*}
-2x+7y+5z-7=\pm 1\text{ \ \ \ \ }(\pm 2).
\end{equation*}

The equations ($\pm $3) have only the solutions $(0,0)$ and $(-1,0)$. None
verifies ($\pm $2), so $A$ is not clean.

\textbf{3}. $A$ has stable range 1, by Theorem \ref{sr1}, since $\det A=0$.

\textbf{Remarks}. 1) Using different references and results, another example
can be provided. In \cite{khu}, the matrix $\left[ 
\begin{array}{cc}
12 & 5 \\ 
0 & 0%
\end{array}%
\right] $ is given as an example of unit-regular matrix which is not clean.
Therefore, this example suits well, if we mention that \emph{unit-regular
elements are exchange and sr1} (see \cite{goo} and \cite{kl}).

2) Incidentally, these two examples are similar: if we take $U=\left[ 
\begin{array}{cc}
3 & -2 \\ 
-7 & 5%
\end{array}%
\right] $ then $UAU^{-1}=\left[ 
\begin{array}{cc}
12 & 5 \\ 
0 & 0%
\end{array}%
\right] $.

\bigskip

In closing, just to have an idea of the "density" of such examples, out of
1988 sr1 matrices with entries bounded in absolute value by 9, \emph{all
were also exchange} and 80 were not clean (verification made with entries
bounded in absolute value by 6).

This suggested to ask the following

\bigskip

\textbf{Question}. \emph{Are all }$2\times 2$\emph{\ integral stable range
one matrices, exchange}?

\bigskip

Using the results in our paper, we can give a negative answer.

Indeed, by Theorem \ref{sr1}, we have to check that units are exchange,
which is true since units are clean and so exchange, and, that zero
determinant matrices are exchange. However this fails.

Indeed, one can show that the matrices $\left[ 
\begin{array}{cc}
2k+1 & 0 \\ 
0 & 0%
\end{array}%
\right] $ are \emph{not} exchange for any $k\notin \{-1,0\}$.

\begin{acknowledgement}
Thanks are due to Tsit Yuen Lam for suggestions, completions and his kind
permission to use material from the forthcoming \cite{lam1} and to the
referee whose corrections and comments improved our presentation.
\end{acknowledgement}

\bigskip

G. C\u{a}lug\u{a}reanu,

Department of Mathematics, Babe\c{s}-Bolyai University,

1 Kog\u{a}lniceanu Street, Cluj-Napoca, Romania

E-mail: calu@math.ubbcluj.ro

\bigskip

H. F. Pop,

Department of Computer Science, Babe\c{s}-Bolyai University,

1 Kog\u{a}lniceanu Street, Cluj-Napoca, Romania

E-mail: hfpop@cs.ubbcluj.ro

\end{document}